\documentclass[12pt]{article}
\usepackage[utf8]{inputenc}
\usepackage{amsmath,amssymb,amsthm}
\usepackage{geometry}
\theoremstyle{remark}
\theoremstyle{definition}
\newtheorem{theorem}{Theorem}[section]
\newtheorem{lemma}[theorem]{Lemma}

\newtheorem{definition}[theorem]{Definition}
\newtheorem{remark}[theorem]{Remark}
\newtheorem{proposition}[theorem]{Proposition}

\newcommand{\iu}{\mathrm{i}}
\usepackage{algorithm}
\usepackage{algpseudocode}
\usepackage[numbers,sort&compress]{natbib}
\title{On spanning trees whose degrees are congruent to one modulo $\ell$}
\author{Zhidan Yan \quad Wei Wang\thanks{Corresponding author\\
		E-mail addresses: wangwei.math@gmail.com (W. Wang); yanzhidan.math@gmail.com (Z. Yan)}\\[2ex]
		{\footnotesize School of Mathematics, Physics and Finance, Anhui Polytechnic University, Wuhu 241000, P. R. China}}
\date{}

\begin{document}
	\maketitle
	
\begin{abstract}
	An $\ell$-congruent spanning tree of a nontrivial connected graph is
	a spanning tree in which every vertex has degree congruent to one
	modulo $\ell$. This notion provides a common generalization of
	classical spanning trees and odd spanning trees. We show, via a constructive greedy algorithm, that every $n$-vertex graph $G$ satisfying $n\equiv2\pmod{\ell}$ and $\delta(G)>\frac{(\ell-1)n}{\ell}$ has an $\ell$-congruent spanning tree. For the special case of odd spanning trees ($\ell=2$), our algorithmic approach simplifies the original proof by Zheng and Wu.  We also derive formulas for the
	numbers of $\ell$-congruent spanning trees in complete graphs and
	complete bipartite graphs. These formulas specialize to the classical
	spanning-tree formulas when $\ell=1$ and to the corresponding
	odd-spanning-tree formulas when $\ell=2$.
\end{abstract}

\vspace{1em}

\noindent\textbf{Keywords:}  degree congruence; odd spanning trees; generating function; roots of unity; greedy algorithm; enumeration \\

\noindent\textbf{Mathematics Subject Classification:} 05C05; 05C07; 05C30
	
	\section{Introduction}
	A spanning tree of a graph $G$ is a connected spanning subgraph
	containing no cycle. A graph has a spanning tree if and only if it is
	connected. Let $\tau(G)$ denote the number of spanning trees of $G$. Counting spanning trees is a classical problem in graph theory and is
	closely connected with several areas of mathematics, statistical
	physics, and theoretical computer science. Recent work has addressed spanning-tree enumeration for line graphs
	\cite{dong2017,gong2018}, complete bipartite and multipartite graphs
	with prescribed spanning forests \cite{dong2022,li2023,li2025},
	and other structured or weighted graph families
	\cite{gong2024,yang2025,zhou2021}.

	The foundation for these studies was laid by the pioneering work on basic graph classes. In particular, the following two fundamental formulas for the number of spanning trees in complete graphs and complete bipartite graphs are well known.
	\begin{theorem}[\cite{cayley1889}]
		$\tau(K_n)=n^{n-2}$.
	\end{theorem}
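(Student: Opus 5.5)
We prove Cayley's formula $\tau(K_n)=n^{n-2}$ with the Prüfer bijection. It is elementary, and it also fits the later enumeration of $\ell$-congruent spanning trees: it records each vertex's degree as one plus its number of occurrences in the code. Identify the vertex set of $K_n$ with $[n]=\{1,\dots,n\}$; the spanning trees of $K_n$ are then exactly the labeled trees on $[n]$. For $n=1$ the count is $1=1^{-1}$, and for $n=2$ it is $1=2^0$, so assume $n\ge 3$. We will construct a bijection between labeled trees on $[n]$ and sequences in $[n]^{n-2}$.

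\emph{Encoding.} Given a tree $T$ on $[n]$, repeat the following step $n-2$ times: delete the leaf with the smallest label, and record the label of its unique neighbour. Every tree with at least two vertices has at least two leaves, so each step is well defined. The result is a sequence $P(T)=(a_1,\dots,a_{n-2})\in[n]^{n-2}$. By induction on the number of steps, each vertex $v$ occurs in $P(T)$ exactly $\deg_T(v)-1$ times. Indeed, $v$ is recorded once for each neighbour deleted before $v$ itself is deleted, or before the process stops with an edge left. Hence the leaves of $T$ are exactly the labels missing from $P(T)$.

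\emph{Decoding.} Given a sequence $(a_1,\dots,a_{n-2})$, set $d(v)=1+\#\{i:a_i=v\}$. For $i=1,\dots,n-2$ in turn, let $b_i$ be the smallest vertex that has current value $d(b_i)=1$ and has not yet been used. Add the edge $b_ia_i$, mark $b_i$ as used, and decrease $d(a_i)$ by one. At the end, exactly two unused vertices remain with $d=1$; join them by an edge. Counting degrees shows that at every step some vertex has $d=1$. The resulting graph has $n-1$ edges, and it is acyclic, because each added edge attaches a vertex that never appears again. So it is a tree.

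\emph{Bijectivity.} This is the main point to check. We must show that decoding inverts encoding. The degree identity says the leaves of $T$ are the labels absent from the code. So at each step, the smallest current leaf in the encoding process is exactly the smallest vertex with $d=1$ in the decoding process: deleting the leaf $b_1$ and lowering $d(a_1)$ reproduces the same situation for the tree $T-b_1$ with code $(a_2,\dots,a_{n-2})$. An induction on $n$ therefore shows that decoding $P(T)$ recovers $T$. Conversely, the tree produced from any sequence has that sequence as its code, by the same induction. Hence $P$ is a bijection from spanning trees of $K_n$ onto $[n]^{n-2}$, and $\tau(K_n)=n^{n-2}$.

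As a by-product, the number of trees with prescribed degrees $d_1,\dots,d_n$ equals $\binom{n-2}{d_1-1,\dots,d_n-1}$. Summing over all degree sequences gives $(x_1+\cdots+x_n)^{n-2}$ evaluated at $x_i=1$, which recovers the formula. This multinomial form is the one we expect to filter later with roots of unity to count trees whose degrees are congruent to one modulo $\ell$.
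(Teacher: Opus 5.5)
Your Prüfer-code argument is correct, and it takes a different route from the paper. The paper does not prove Cayley's formula; it cites it. It later recovers it only as the case $\ell=1$ of Theorem~\ref{kn}, where $f_1(x)=e^x$ gives $\tau_1(K_n)=(n-2)!\,[x^{n-2}]e^{nx}=n^{n-2}$. That derivation rests on the prescribed-degree count of Lemma~\ref{kd}, which the paper imports from Berge and Lov\'asz.

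Your bijection is self-contained, and it proves Lemma~\ref{kd} along the way. Each $v$ occurs $d_T(v)-1$ times in the code, so trees with degree sequence $(d_1,\dots,d_n)$ correspond to words with prescribed letter multiplicities. You therefore supply exactly the ingredient the paper takes from the literature. The paper's generating-function route adds nothing new for $\ell=1$ itself. Its value is that it packages the multinomial sum so the roots-of-unity filter treats every $\ell$ uniformly.

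Two small points should be made explicit in a write-up:
\begin{itemize}
\item $T-b_1$ lives on $[n]\setminus\{b_1\}$, not on $[n-1]$. So the induction in the bijectivity step should be stated for trees on an arbitrary totally ordered $n$-element label set.
\item The converse direction needs the fact that the decoded tree has $\deg(v)=1+\#\{i:a_i=v\}$. This holds because each vertex is either some $b_i$ or an endpoint of the final edge, but not both, and gains one edge per occurrence as $a_i$. This is what makes $b_1$ its smallest leaf, with neighbour $a_1$.
\end{itemize}
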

	\begin{theorem}[\cite{fiedler1958}]
		$\tau(K_{m,n})=m^{n-1}n^{m-1}$.
	\end{theorem}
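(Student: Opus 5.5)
The statement to prove is $\tau(K_{m,n})=m^{n-1}n^{m-1}$. The plan is to count spanning trees of $K_{m,n}$ by their degree sequences and then sum with the binomial theorem. This is the route the paper later generalizes with roots-of-unity filters.

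Let the parts be $A=\{a_1,\dots,a_m\}$ and $B=\{b_1,\dots,b_n\}$. The first and main step is a refined bipartite Prüfer-type statement. The number of spanning trees of $K_{m,n}$ in which $a_i$ has degree $d_i$ and $b_j$ has degree $e_j$ is
\[
\binom{n-1}{d_1-1,\dots,d_m-1}\binom{m-1}{e_1-1,\dots,e_n-1},
\]
whenever all degrees are at least $1$ and $\sum_i d_i=m+n-1=\sum_j e_j$; otherwise it is $0$.

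To prove this, I would use a bipartite Prüfer code. Repeatedly delete the smallest-labelled leaf and record its neighbour. This produces a sequence of length $m+n-2$. I would split it into the subsequence of entries lying in $A$, of length $n-1$, and the subsequence lying in $B$, of length $m-1$. Each $a_i$ appears $d_i-1$ times and each $b_j$ appears $e_j-1$ times.

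The bijection would be established as follows. The pair of sequences determines the tree by the standard reconstruction: at each step the deleted leaf is the smallest vertex not appearing later and not yet deleted. That leaf's side determines from which subsequence the next entry is read.

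This bijection is the step I expect to be the main obstacle, because interleaving the two sequences must be shown to be forced, not chosen. An alternative that avoids it is induction on $m+n$. One removes a leaf $b_j$ with $e_j=1$; such a leaf exists since $\sum_j e_j=m+n-1<2n$ unless $m>n$, in which case one symmetrically uses a leaf in $A$. The neighbour of the removed leaf is some $a_i$ with $d_i\ge 2$, or with $d_i=1$ only when $m=1$. Summing over that neighbour gives exactly the Pascal-type recursion satisfied by the product of multinomials. I would present this inductive argument, since it is cleaner to verify.

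Given the refined count, the rest is routine. Summing over all $(d_i)$ and $(e_j)$, the two multinomial factors decouple, and the multinomial theorem gives
\[
\sum\binom{n-1}{d_1-1,\dots,d_m-1}=m^{n-1},\qquad \sum\binom{m-1}{e_1-1,\dots,e_n-1}=n^{m-1}.
\]
The product is $m^{n-1}n^{m-1}$, as claimed.

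Equivalently, in generating-function form, the count is the coefficient extraction in
\[
(x_1+\dots+x_m)^{n-1}(y_1+\dots+y_n)^{m-1}\prod_i x_i\prod_j y_j
\]
evaluated at all variables equal to $1$. This form is the one that later adapts to $\ell$-congruent degrees by averaging over $\ell$-th roots of unity.
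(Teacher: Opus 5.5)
Your proposal is correct and follows the paper's route. The paper only cites this formula, but it recovers it as the $\ell=1$ case of Theorem~\ref{kmn}: it takes Moon's degree-refined count (Lemma~\ref{bd}) and sums over all degree sequences, and that summation is exactly your multinomial-theorem step. The one difference is that you also prove Lemma~\ref{bd} by leaf-removal induction (essentially the inductive proof of Ge and Yu that the paper cites); the induction and the Pascal-type recursion check out. A minor correction: a leaf whose neighbour also has degree one forces $m=n=1$, the base case, not merely $m=1$.
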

Recently, Zheng and Wu \cite{zheng2025} introduced the notion of an
\emph{odd spanning tree}, namely, a spanning tree in which every vertex
has odd degree. By the handshaking lemma, a graph of odd order cannot
have an odd spanning tree. For graphs of even order, they obtained a
sufficient condition for the existence of an odd spanning tree.
	\begin{theorem}[\cite{zheng2025}]\label{md}
		Let $n$ be a positive even number. If $G$ is a connected graph of order $n$ with $\delta(G)\ge \frac{n}{2}+1$, then  $G$ has an odd spanning tree.
	\end{theorem}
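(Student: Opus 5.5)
The plan is to build the odd spanning tree greedily, two vertices at a time, while keeping every degree in the current tree odd. Start with any edge $uv$ of $G$; it is a tree $T$ on two vertices, both of degree $1$. At a general step let $T$ be the current tree and $R=V(G)\setminus V(T)$. Since $n$ is even and $|V(T)|$ stays even, $|R|$ is even. If $R\ne\emptyset$, I look for a vertex $x\in V(T)$ with two neighbours $y,z\in R$, and add the edges $xy,xz$. This raises $\deg_T(x)$ by $2$, so its parity is unchanged, and gives $y,z$ degree $1$. The result is again a tree whose degrees are all odd, with two more vertices. Iterating ends in a spanning tree of $G$ with all degrees odd. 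So everything reduces to one claim: \emph{whenever $R\neq\emptyset$, some vertex of $T$ has at least two neighbours in $R$.}

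To prove the claim, write $t=|V(T)|$ and $r=|R|=n-t$, so $r\ge 2$ is even. There are two regimes.

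\emph{Case $t\le \delta-1$.} Any $x\in V(T)$ has at most $t-1$ neighbours inside $T$. Hence it has at least $\delta-t+1\ge 2$ neighbours in $R$, and we are done.

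\emph{Case $t\ge\delta$, i.e.\ $r\le n-\delta$.} Suppose, for contradiction, that every vertex of $T$ has at most one neighbour in $R$. Then the number of edges between $V(T)$ and $R$ is at most $t=n-r$. On the other hand, each $y\in R$ has at most $r-1$ neighbours in $R$, so it has at least $\delta-r+1$ neighbours in $T$. This gives $r(\delta-r+1)\le n-r$, that is,
\[
f(r):=r(\delta+2-r)-n\le 0 .
\]

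The main obstacle is to show that $f(r)>0$ on the whole admissible range $2\le r\le n-\delta$. Since $f$ is concave in $r$, it suffices to check the two endpoints. At $r=2$ we get $f(2)=2\delta-n\ge 2>0$. At $r=s:=n-\delta$ (relevant only when $s\ge2$), we have $f(s)=s(2\delta+2-n)-n=s(n+2-2s)-n$. Using $\delta\ge \frac n2+1$ gives $2\le s\le \frac n2-1$. As a function of $s$ this expression is again concave, so it is enough to check its endpoints on this range: at $s=2$ it equals $n-4$, and at $s=\frac n2-1$ it equals $4(\frac n2-1)-n=n-4$. Both are positive when $n>4$. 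The small cases $n\le 4$ are checked directly; for instance, $n=4$ with $\delta\ge3$ forces $G=K_4$, which contains the star $K_{1,3}$. This contradiction proves the claim. Connectivity of $G$ is not needed beyond what the degree condition already gives, and the procedure is an explicit greedy algorithm, which is consistent with the paper's stated approach.
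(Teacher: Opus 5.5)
Your proof is correct and is essentially the paper's argument: the same greedy procedure (the paper's algorithm with $\ell=2$) and the same two counting bounds at a stuck step, namely $t\ge\delta$ and $r(\delta-r+1)\le t$. The only difference is the final algebra: you conclude by a concavity check of $r(\delta+2-r)-n$ at the endpoints of $2\le r\le n-\delta$, where the paper substitutes $s=2x$ and $\delta\ge\frac{n+2}{2}$ and derives $(x-1)(4x-n)\ge 2x$ (your separate check of small $n$ is unnecessary, since for $n\le4$ the range $2\le r\le n-\delta\le\frac n2-1$ is empty).
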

Let $\tau_o(G)$ denote the number of odd spanning trees of $G$.
 It is natural to determine
$\tau_o(G)$ for important families of graphs. Feng, Chen, and Wu
\cite{feng2026} obtained a formula for $\tau_o(K_n)$. More recently,
the corresponding formula for $K_{m,n}$ was established by Ge and Yu
\cite{ge2026} and, independently, by Xu and Xu \cite{xu2026}.
	\begin{theorem}[\cite{feng2026}]\label{tc}
		For the complete graph $K_n$, we have
		\begin{equation*}
			\tau_o(K_n)=\frac{1}{2^n}\sum_{k=0}^n\binom{n}{k}(2k-n)^{n-2}.
		\end{equation*}
	\end{theorem}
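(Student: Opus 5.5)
We will prove Theorem~\ref{tc} through the degree-sequence refinement of Cayley's formula, followed by a roots-of-unity filter that keeps only odd degrees. The starting point is the classical consequence of the Pr\"ufer bijection. For positive integers $d_1,\dots,d_n$ with $\sum_i d_i=2n-2$, the number of labelled trees on $[n]$ in which vertex $i$ has degree $d_i$ is the multinomial coefficient
\[
\binom{n-2}{d_1-1,\dots,d_n-1}.
\]
In a Pr\"ufer code, vertex $i$ appears exactly $d_i-1$ times. It follows that the generating polynomial $\sum_T\prod_i x_i^{\deg_T(i)}$ over all spanning trees $T$ of $K_n$ equals
\[
x_1x_2\cdots x_n\,(x_1+\cdots+x_n)^{n-2}.
\]
Setting every $x_i=1$ recovers $\tau(K_n)=n^{n-2}$, consistent with the first theorem above.

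Next we isolate the trees whose degrees are all odd. Write $F(x_1,\dots,x_n)$ for the polynomial above. Substituting $x_i=\pm1$ shows that
\[
\tau_o(K_n)=\frac{1}{2^n}\sum_{\varepsilon\in\{\pm1\}^n}\Bigl(\prod_i\varepsilon_i\Bigr)F(\varepsilon),
\]
because $\frac{1-(-1)^{d}}{2}$ is $1$ for odd $d$ and $0$ for even $d$. Expanding $\prod_i\frac{1-(-1)^{d_i}}{2}$ for each tree gives precisely this signed average.

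Now we evaluate this sum. Let $\varepsilon$ have exactly $k$ coordinates equal to $+1$; there are $\binom{n}{k}$ such vectors. Then
\[
\prod_i\varepsilon_i\cdot F(\varepsilon)=\Bigl(\prod_i\varepsilon_i\Bigr)^2(k-(n-k))^{n-2}=(2k-n)^{n-2}.
\]
The sign produced by the filter cancels the sign coming from the factor $x_1\cdots x_n$ in $F$, since their product is $(\pm1)^2=1$. Summing over $k$ yields exactly the stated formula.

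The only step needing real care is this sign bookkeeping in the filter. One has to use the weight $\prod_i\varepsilon_i$, not $\varepsilon^{d}$ directly, so that the contribution is $\prod_i\frac{1-\varepsilon_i^{d_i}}{2}$. One must also check how the resulting signs combine, including the convention $0^0=1$ when $n=2$. No trees arise when $n$ is odd, which matches the formula: pairing $k\leftrightarrow n-k$ makes the sum vanish because $n-2$ is odd.

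As a sanity check we will confirm the degree-sequence count: if the Pr\"ufer code contains each vertex $i$ exactly $d_i-1$ times, the number of arrangements of the length-$(n-2)$ code is the multinomial coefficient above, and the product form of $F$ follows from the multinomial theorem.
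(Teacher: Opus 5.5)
Your argument is correct and is essentially the paper's proof of Theorem~\ref{kn} specialized to $\ell=2$: the Pr\"ufer prescribed-degree count (Lemma~\ref{kd}), then a $\pm1$ roots-of-unity filter, then grouping by the number $k$ of $+1$'s. The only difference is packaging. You evaluate the multivariate polynomial $x_1\cdots x_n(x_1+\cdots+x_n)^{n-2}$ at sign vectors, with the weight $\prod_i\varepsilon_i$ absorbing the shift $d_i\mapsto d_i-1$. The paper instead imposes evenness of $d_j-1$ through $\cosh(x)^n=2^{-n}\sum_{k}\binom{n}{k}e^{(2k-n)x}$ and extracts $(n-2)!\,[x^{n-2}]$, which gives the identical sum.
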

	\begin{theorem}[\cite{ge2026,xu2026}]
		For the complete bipartite graph $K_{m,n}$, we have	\begin{equation*}\tau_o(K_{m,n})=\frac{1}{2^{m+n}}
		\left[\sum_{k=0}^m\binom{m}{k}(2k-m)^{n-1}
		\right] \left[\sum_{s=0}^n\binom{n}{s}(2s-n)^{m-1}
		\right].
		\end{equation*}
	\end{theorem}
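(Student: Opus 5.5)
We count odd spanning trees of $K_{m,n}$ through the bipartite analogue of the Prüfer correspondence, and extract the odd-degree terms with a roots-of-unity (here $\pm1$) filter. Let the parts be $A=\{a_1,\dots,a_m\}$ and $B=\{b_1,\dots,b_n\}$. The classical refinement behind $\tau(K_{m,n})=m^{n-1}n^{m-1}$ states that the number of spanning trees with prescribed degrees $d(a_i)$ and $d(b_j)$ equals
\begin{equation*}
\binom{n-1}{d(a_1)-1,\dots,d(a_m)-1}\binom{m-1}{d(b_1)-1,\dots,d(b_n)-1}.
\end{equation*}
The constraints are $d(a_i),d(b_j)\ge1$, $\sum_i d(a_i)=m+n-1=\sum_j d(b_j)$. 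We take this as the starting point, proving it if needed with the bipartite Prüfer code: a word of length $n-1$ over $A$ together with a word of length $m-1$ over $B$, in which each vertex appears exactly $\deg-1$ times. Equivalently,
\begin{equation*}
\sum_{T}\prod_i x_i^{d_T(a_i)-1}\prod_j y_j^{d_T(b_j)-1}=(x_1+\dots+x_m)^{n-1}(y_1+\dots+y_n)^{m-1}.
\end{equation*}
This polynomial identity is the key input, and everything after it is mechanical.

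Next, a tree is odd exactly when every exponent $d-1$ is even. The operator
\begin{equation*}
f\mapsto \frac{1}{2^{m+n}}\sum_{\epsilon\in\{\pm1\}^m,\ \eta\in\{\pm1\}^n} f(\epsilon,\eta)
\end{equation*}
keeps precisely the monomials of $f$ that are even in every variable. Applying it to the right-hand side and evaluating at the all-ones point counts odd spanning trees, so
\begin{equation*}
\tau_o(K_{m,n})=\frac{1}{2^{m+n}}\sum_{\epsilon,\eta}(\epsilon_1+\dots+\epsilon_m)^{n-1}(\eta_1+\dots+\eta_n)^{m-1}.
\end{equation*}
The sum factors into a sum over $\epsilon$ times a sum over $\eta$. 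Grouping the vectors $\epsilon$ by the number $k$ of entries equal to $+1$ gives $\sum_\epsilon(\sum_i\epsilon_i)^{n-1}=\sum_{k}\binom{m}{k}(2k-m)^{n-1}$. The same grouping by the number $s$ of $+1$ entries handles the $\eta$ factor, which yields the stated formula.

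The main obstacle is justifying the degree-refined count. If it is not taken as known, I would prove it by induction on $m+n$. The total degree on each side is $m+n-1$, and $m+n-1<2m$ or $<2n$, so some side contains a leaf, say $b_n$ with neighbour $a_i$. Removing $b_n$ gives a tree of $K_{m,n-1}$ in which $a_i$ has degree one less. Summing over $i$ then reproduces the polynomial identity, since $(x_1+\dots+x_m)^{n-1}=(x_1+\dots+x_m)\cdot(x_1+\dots+x_m)^{n-2}$. The $y$-side must also be checked: it contributes the factor $(\sum y)^{m-1}$ unchanged, because removing $b_n$, which has degree one, contributes exponent $0$. The degenerate cases $m=1$ or $n=1$ (stars) have to be verified directly.
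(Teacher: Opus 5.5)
Your proof is correct and is essentially the paper's argument. The paper also starts from Moon's prescribed-degree count (Lemma~\ref{bd}), applies a roots-of-unity filter, and obtains this formula as the $\ell=2$ case of Theorem~\ref{kmn}. The only difference is packaging: the paper passes through the exponential generating function $f_2(x)^m=\cosh^m x=2^{-m}\sum_k\binom{m}{k}e^{(2k-m)x}$ and extracts coefficients, whereas you apply the sign filter directly to the multivariate multinomial identity, which yields the same two binomial sums without formal power series.
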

We note that the above form of Theorem~\ref{tc} is due to Ge and Yu
\cite{ge2026}. The original statement in \cite{feng2026} distinguishes
between odd and even values of $n$:
\begin{equation*}
	\tau_o(K_n)=
	\begin{cases}
		0, & \text{if $n$ is odd},\\[2mm]
		\displaystyle
		\frac{1}{2^n}
		\sum_{k=0}^n\binom{n}{k}(2k-n)^{n-2},
		& \text{if $n$ is even}.
	\end{cases}
\end{equation*}
Indeed, in the summation formula of Theorem~\ref{tc}, if $n$ is odd, the $k$-th summand is the negative of the $(n-k)$-th summand, so the total sum automatically vanishes. Similarly,
$\tau_o(K_{m,n})=0$ whenever at least one of $m$ and $n$ is even.
In particular, $\tau_o(K_{m,m})=0$ for even $m$, showing that the
minimum-degree condition in Theorem~\ref{md} is sharp in the sense  the  threshold $\delta(G)\ge n/2+1$ cannot be weakened to  $\delta(G)\ge n/2$

The main aim of this paper is to place classical spanning trees and odd spanning trees in a common framework. To this end, we introduce
the following notion. 
	\begin{definition}
		Let $\ell$ be a positive integer. A nontrivial tree $T$ is called
		an $\ell$-congruent tree if the degrees of all vertices of $T$ are
		congruent modulo $\ell$. Since every nontrivial tree has a leaf,
		this is equivalent to
		\[
		d_T(v)\equiv 1\pmod{\ell}
		\qquad\text{for every }v\in V(T).
		\]
		An $\ell$-congruent spanning tree of a graph $G$ is a spanning tree
		of $G$ that is an $\ell$-congruent tree.
	\end{definition}
\begin{remark}
	We restrict the definition of an $\ell$-congruent tree to nontrivial
	trees. Indeed, the unique vertex of $K_1$ has degree zero. Thus, the
	degrees of its vertices are trivially congruent modulo $\ell$, but
	its degree is not congruent to one modulo $\ell$ unless $\ell=1$.
\end{remark}
	\begin{remark}
	Since all integers are congruent modulo $1$, a $1$-congruent spanning
	tree is simply an ordinary spanning tree. Moreover, a
	$2$-congruent spanning tree is precisely an odd spanning tree.
	Thus, $\ell$-congruent spanning trees provide a common framework for
	classical spanning trees and odd spanning trees.
	\end{remark}
Our first main result gives a necessary condition and a sufficient minimum-degree condition for the existence of an $\ell$-congruent
spanning tree.
	\begin{theorem}\label{ns}
		Let $G$ be a connected graph of order $n\ge 2$. If $G$ has an $\ell$-congruent spanning tree, then $n\equiv 2\pmod{\ell}$. Conversely, if $n\equiv 2\pmod{\ell}$ and $\delta(G)> \frac{(\ell-1)n}{\ell}$, then $G$ has an $\ell$-congruent spanning tree.
	\end{theorem}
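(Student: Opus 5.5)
The first part is a parity-type count modulo $\ell$. If $T$ is an $\ell$-congruent spanning tree, then $d_T(v)\equiv 1\pmod{\ell}$ for all $n$ vertices. Hence
\[
2(n-1)=\sum_{v}d_T(v)\equiv n \pmod{\ell},
\]
which gives $n\equiv 2\pmod{\ell}$.

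For the converse I would grow a tree greedily. The basic tool is a common-neighbour lemma. For each vertex $v$, the set $\overline{N}[v]=V(G)\setminus N(v)$ (which contains $v$) has size $n-d(v)<n-\frac{(\ell-1)n}{\ell}=\frac{n}{\ell}$. So for any $\ell$ vertices $x_1,\dots,x_\ell$ the union $\bigcup_i\overline{N}[x_i]$ has fewer than $n$ elements. Therefore there is a vertex $w\notin\{x_1,\dots,x_\ell\}$ adjacent to all of them. I would start from a single edge $T_0=K_2$, which is $\ell$-congruent and has order $2$. I would then maintain an $\ell$-congruent subtree $T$ with $|V(T)|\equiv 2\pmod{\ell}$, so that the set $R=V(G)\setminus V(T)$ of uncovered vertices always has size divisible by $\ell$. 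While $R\neq\emptyset$ we have $|R|\ge\ell$, and I want to enlarge $T$ by exactly $\ell$ vertices while keeping every degree $\equiv1\pmod{\ell}$.

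The extension step runs as follows. Pick $u_1,\dots,u_\ell\in R$ and let $w$ be a common neighbour given by the lemma. If $w\in V(T)$, attach $u_1,\dots,u_\ell$ to $w$ as leaves. This raises $d_T(w)$ by exactly $\ell$, creates new leaves of degree $1$, and leaves all other degrees unchanged, so we are done.

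The remaining case $w\in R$ is the main obstacle. Here the star centred at $w$ has the right degrees internally, but joining it to $T$ by a single edge spoils the congruence at the attachment point. I would first try to avoid this case by a cleverer choice of the $\ell$ vertices fed into the lemma, mixing tree vertices and uncovered vertices. For instance, apply the lemma to a leaf $y$ of $T$ together with $u_1,\dots,u_{\ell-1}\in R$, obtaining a common neighbour $w$.
\begin{itemize}
\item If $w\in V(T)$, I would re-hang $y$ and attach the $u_i$ at $w$. The degree bookkeeping must then be checked carefully, since the old neighbour of $y$ loses one.
\item If $w\in R$, I would make $w$ a new internal vertex adjacent to $u_1,\dots,u_{\ell-1}$ and reroute through $y$. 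A local swap of one edge of $T$ (deleting the edge at $y$ and reinserting it through $w$) should restore all residues.
\end{itemize}
Alternatively, one can phrase the argument extremally. Take $T$ to be a maximum $\ell$-congruent subtree and suppose $R\ne\emptyset$. Then derive a contradiction by applying the common-neighbour lemma to well-chosen $\ell$-sets (all in $R$; or a leaf of $T$, its parent and vertices of $R$). The aim is to show that some choice always produces a common neighbour already in $T$, or a configuration where a bounded local exchange works.

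Once the extension step is established, the algorithm terminates after $(n-2)/\ell$ rounds with an $\ell$-congruent spanning tree. For $\ell=2$ it should reduce to a short proof of Theorem~\ref{md}.
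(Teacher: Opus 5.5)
Your necessary condition is correct and is the paper's argument. Your overall scheme is also the paper's: grow an $\ell$-congruent subtree by exactly $\ell$ vertices per round, keeping $|V(T)|\equiv 2\pmod{\ell}$. The gap is that you never show the extension step can always be carried out, and the case you call the main obstacle is left open. Your common-neighbour lemma is true, but it fixes the $\ell$ new vertices $u_1,\dots,u_\ell\in R$ first, and the common neighbour $w$ it produces may then lie in $R$. The bound $n-d(v)<n/\ell$ forces $w\in V(T)$ only in the last round, when $R=\{u_1,\dots,u_\ell\}$. Your repairs for $w\in R$ fail as stated:
\begin{itemize}
\item In the first bullet, $p$ (the tree neighbour of the leaf $y$) ends with $d(p)\equiv 0\pmod{\ell}$, whether or not $w=p$. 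Only $\ell-1$ new vertices are added, so the invariant $|V(T)|\equiv 2$ breaks too.
\item In the second bullet, $w$ joined to $y,u_1,\dots,u_{\ell-1}$ has degree $\ell\equiv 0$ and $y$ gets degree $2$. The swap $yp\mapsto wp$ that would fix both needs $wp\in E(G)$, i.e.\ a common neighbour of the $\ell+1$ vertices $y,p,u_1,\dots,u_{\ell-1}$. Your lemma cannot supply this, since $(\ell+1)\cdot\frac{n}{\ell}>n$.
\end{itemize}
The extremal variant only restates the goal.

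What is missing is a global counting argument. It must show that whenever $R\neq\emptyset$, some vertex of $T$ already has at least $\ell$ neighbours in $R$, so only your easy case ever occurs; this is exactly the paper's greedy rule. Suppose not, and let $t=|V(T)|$, $s=|R|$.
\begin{itemize}
\item Each tree vertex has at most $t-1$ neighbours in $T$, so $\delta-t+1\le\ell-1$.
\item Each vertex of $R$ has at least $\delta-s+1$ neighbours in $T$, so $s(\delta-s+1)\le e(V(T),R)\le(\ell-1)t$.
\end{itemize}
Write $s=x\ell$ with $x\ge1$; this uses $t\equiv n\equiv 2$. Use integrality to sharpen the hypothesis to $\delta\ge\frac{(\ell-1)n+2}{\ell}$. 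The second inequality then becomes $(x-1)\bigl(x\ell^2-(\ell-1)n\bigr)\ge 2x$, which forces $x\ge2$ and $x\ell^2>(\ell-1)n$. The first gives $x\ell^2\le n+\ell^2-2\ell-2$. Together these give $(\ell-2)n<\ell^2-2\ell-2$. This is impossible for $\ell=2$, and for $\ell\ge3$ it gives $n<\ell$, contradicting $n\ge s\ge 2\ell$. The case $\ell=1$ is just connectivity. The first bound handles trees that are still small and the second handles small $R$. A common-neighbour lemma for $\ell$ prescribed vertices does not supply this.
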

Theorem~\ref{ns} generalizes Theorem~\ref{md}. Indeed, when $\ell=2$, the congruence $n\equiv2\pmod{2}$ implies that $n$ is even. Since vertex degrees are integers, the strict inequality $\delta(G)>\frac{n}{2}$ is exactly equivalent to $\delta(G)\ge\frac{n}{2}+1$, precisely recovering the condition of Theorem~\ref{md}. Furthermore, in the special case $\ell=1$, the congruence $n\equiv2\pmod{1}$ is automatic, and the degree condition reduces to $\delta(G)>0$. Hence Theorem~\ref{ns} also recovers the elementary fact that every nontrivial connected graph has a spanning tree.

	Our proof of the sufficient condition is constructive. Starting from
	an arbitrary edge, we repeatedly adjoin $\ell$ new leaves to a vertex
	of the current tree whenever possible. This greedy procedure preserves
	the degree congruence at every step, and the minimum-degree condition
	guarantees that it cannot terminate before producing a spanning tree.
	For $\ell=2$, it yields an alternative algorithmic proof of
	Theorem~
	\ref{md} based on a single uniform extension rule.
	
		\begin{remark}
		While the minimum-degree condition in Theorem~\ref{ns} is sharp for $\ell=2$ (as demonstrated by $K_{m,m}$ with even $m$), We do not know whether this threshold is best possible for $\ell\ge3$. Our constructive proof based on a greedy approach guarantees existence but may overstep the optimal degree threshold. Finding the exact optimal minimum-degree condition for the existence of an $\ell$-congruent spanning tree for $\ell \ge 3$ remains an interesting problem for future research.
	\end{remark}

We also enumerate $\ell$-congruent spanning trees in complete graphs
and complete bipartite graphs. To unify the notation, let $\tau_\ell(G)$ denote the number of $\ell$-congruent spanning trees of $G$, so that $\tau_1(G)=\tau(G)$ and $\tau_2(G)=\tau_o(G)$.	\begin{theorem}\label{kn}
		Let $n\ge 2$. For the complete graph $K_n$, we have
		\begin{equation}
			\tau_{\ell}(K_n) = \frac{1}{\ell^n} \sum_{\substack{k_0+k_1+\cdots+k_{\ell-1}=n \\ k_0,\ldots,k_{\ell-1}\ge 0}} \frac{n!}{k_0! k_1! \cdots k_{\ell-1}!} \left( \sum_{p=0}^{\ell-1} k_p  e^{\frac{2\pi \iu }{\ell}p}
			\right)^{n-2}.
		\end{equation}
	\end{theorem}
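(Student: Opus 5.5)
We will use the degree-sequence form of Cayley's formula together with a roots-of-unity filter. By the classical Prüfer correspondence, for positive integers $d_1,\dots,d_n$ with $\sum_i d_i=2n-2$, the number of labelled trees on $[n]$ in which vertex $i$ has degree $d_i$ is $\binom{n-2}{d_1-1,\dots,d_n-1}$. Setting $a_i=d_i-1\ge 0$, this gives the generating-function identity
\[
\sum_{T}\prod_{i=1}^n x_i^{d_T(i)-1}=(x_1+x_2+\cdots+x_n)^{n-2},
\]
where the sum runs over all spanning trees $T$ of $K_n$. Our goal is to count only those trees in which every exponent $d_T(i)-1$ is divisible by $\ell$.

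To extract these trees, let $\omega=e^{2\pi\iu/\ell}$ and use the filter $\frac1\ell\sum_{j=0}^{\ell-1}\omega^{ja}=[\ell\mid a]$ in each variable separately. Substituting $x_i=\omega^{j_i}$ and averaging independently over $j_1,\dots,j_n\in\{0,\dots,\ell-1\}$ gives
\[
\tau_\ell(K_n)=\frac{1}{\ell^n}\sum_{(j_1,\dots,j_n)\in\{0,\dots,\ell-1\}^n}\Bigl(\sum_{i=1}^n\omega^{j_i}\Bigr)^{n-2}.
\]
This step is justified because the tree sum is a finite polynomial identity, so we may interchange the sums.

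Next we group the $\ell^n$ tuples by their type. Let $k_p=\#\{i: j_i=p\}$. Then $\sum_i\omega^{j_i}=\sum_p k_p\omega^p$, and the number of tuples of a given type $(k_0,\dots,k_{\ell-1})$ with $\sum_p k_p=n$ is the multinomial coefficient $\frac{n!}{k_0!\cdots k_{\ell-1}!}$. This produces exactly the formula stated in Theorem~\ref{kn}.

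The only delicate point is the case $n=2$, where the exponent is $0$. Here we need the convention $0^0=1$, which is consistent with the polynomial identity (the sum $(x_1+x_2)^0=1$ holds identically). The main step to state carefully is the degree-sequence version of Cayley's formula. It follows from the Prüfer code: vertex $i$ appears in the code exactly $d_T(i)-1$ times, and the code is an arbitrary word of length $n-2$, so the generating function is immediate. Everything after that is formal manipulation.
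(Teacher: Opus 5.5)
Your proof is correct and follows essentially the same route as the paper: the prescribed-degree Cayley count (Lemma~\ref{kd}), a roots-of-unity filter, and grouping by the type $(k_0,\dots,k_{\ell-1})$ with multinomial coefficients. The only difference is packaging. The paper filters the exponential series $f_\ell(x)=\frac{1}{\ell}\sum_{j}\exp(\zeta^j x)$ and extracts $(n-2)![x^{n-2}]$ from $f_\ell(x)^n$, whereas you evaluate the polynomial $(x_1+\cdots+x_n)^{n-2}$ at $n$-tuples of roots of unity. Your version avoids power series and states explicitly the $0^0=1$ convention for $n=2$, which the paper uses implicitly through $[x^0]e^{\alpha x}=1$.
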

	
	\begin{theorem}\label{kmn}
		Let $m,n$ and $\ell$ be positive integers. For the complete bipartite graph $K_{m,n}$, we have
	\begin{equation}
		\begin{aligned}
			\tau_{\ell}(K_{m,n})
			={}& \frac{1}{\ell^{m+n}}
			\left[
			\sum_{\substack{
					k_0+k_1+\cdots+k_{\ell-1}=m\\
				k_0,\ldots,k_{\ell-1}\ge 0
			}}
			\frac{m!}{k_0!k_1!\cdots k_{\ell-1}!}
			\left(
			\sum_{p=0}^{\ell-1}
			k_p e^{\frac{2\pi\iu}{\ell}p}
			\right)^{n-1}
			\right]
			\\
			&\quad\times
			\left[
			\sum_{\substack{
					s_0+s_1+\cdots+s_{\ell-1}=n\\
					s_0,\ldots,s_{\ell-1}\ge 0
			}}
			\frac{n!}{s_0!s_1!\cdots s_{\ell-1}!}
			\left(
			\sum_{q=0}^{\ell-1}
			s_q e^{\frac{2\pi\iu}{\ell}q}
			\right)^{m-1}
			\right].
		\end{aligned}
	\end{equation}
		\end{theorem}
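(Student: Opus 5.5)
The plan is to count spanning trees of $K_{m,n}$ by their degree sequences and then extract the congruence condition with a roots-of-unity filter. Let the parts be $A=\{a_1,\dots,a_m\}$ and $B=\{b_1,\dots,b_n\}$. The classical refinement of Theorem~(Fiedler) states that the number of spanning trees of $K_{m,n}$ in which $a_i$ has degree $d_i$ and $b_j$ has degree $f_j$ is
\[
\binom{n-1}{d_1-1,\dots,d_m-1}\binom{m-1}{f_1-1,\dots,f_n-1},
\]
with the convention that this is zero unless all $d_i,f_j\ge1$, $\sum d_i=m+n-1$ and $\sum f_j=m+n-1$. I would quote this or, if needed, prove it by a bipartite Prüfer code. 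In the generating-function form it reads
\[
\sum_{T}\prod_i x_i^{d_T(a_i)}\prod_j y_j^{d_T(b_j)}=x_1\cdots x_m\,y_1\cdots y_n\,(x_1+\cdots+x_m)^{n-1}(y_1+\cdots+y_n)^{m-1}.
\]
The right side factors into an $x$-part and a $y$-part. This factorization is what produces the product structure in the statement of Theorem~\ref{kmn}.

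Next, write $\omega=e^{2\pi\iu/\ell}$. For a polynomial $F(x_1,\dots,x_m)$, the sum of the coefficients of the monomials whose exponents are all $\equiv1\pmod{\ell}$ equals
\[
\frac{1}{\ell^m}\sum_{p_1,\dots,p_m}\omega^{-(p_1+\cdots+p_m)}F(\omega^{p_1},\dots,\omega^{p_m}),
\]
because $\frac1\ell\sum_{p}\omega^{p(d-1)}$ equals $1$ when $d\equiv1\pmod{\ell}$ and $0$ otherwise. Applied to the $x$-factor $x_1\cdots x_m(x_1+\cdots+x_m)^{n-1}$, the factor $\prod\omega^{p_i}$ cancels the $\omega^{-\sum p_i}$, leaving
\[
\frac{1}{\ell^m}\sum_{(p_1,\dots,p_m)}\Bigl(\sum_i\omega^{p_i}\Bigr)^{n-1}.
\]
The summand depends only on how many indices take each value $p$, namely $k_p=\#\{i:p_i=p\}$. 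Grouping the tuples by $(k_0,\dots,k_{\ell-1})$ gives the multinomial $\frac{m!}{k_0!\cdots k_{\ell-1}!}$ and the sum $\sum_p k_p\omega^p$. That is exactly the first bracket of Theorem~\ref{kmn}. The $y$-factor is handled in the same way and gives the second bracket.

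Because the filter acts on each variable separately, it applies independently to the two factors, and the product of the two filtered sums is the filtered sum of the product. This yields the formula in Theorem~\ref{kmn}. When $m=1$ or $n=1$, exponents of the form $0^0$ may appear, and I would interpret them as $1$, consistent with the polynomial identity.

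The main obstacle I expect is justifying the degree-refined count for $K_{m,n}$ cleanly. If I cannot simply cite it, I would construct the bipartite Prüfer bijection. Repeatedly delete the smallest leaf, and record the neighbour in two separate sequences: one of length $n-1$ with entries in $A$, and one of length $m-1$ with entries in $B$. Each vertex then appears exactly $\deg-1$ times in its sequence. The delicate point is stopping the deletion at the right moment so that the two sequences have exactly these lengths. The roots-of-unity computation after that is routine.
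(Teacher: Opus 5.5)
Your proposal is correct and is essentially the paper's proof. Both arguments rest on Moon's degree-refined count (Lemma~\ref{bd}), the factorization into an $A$-part and a $B$-part, and a roots-of-unity filter; you evaluate the polynomial $x_1\cdots x_m(x_1+\cdots+x_m)^{n-1}$ at roots of unity directly, whereas the paper runs the same filter through the exponential generating function $f_\ell$ and Lemma~\ref{fl} before extracting coefficients, which amounts to the same computation. Your remark that $0^0$ must be read as $1$ when $m=1$ or $n=1$ is a convention the paper uses only implicitly, through $r![z^r]\exp(\alpha z)=\alpha^r$ with $r=0$.
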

		Theorems~\ref{kn} and~\ref{kmn} simultaneously generalize the
		classical enumeration formulas for spanning trees and the recent
		formulas for odd spanning trees. Indeed, when $\ell=1$, the degree
		condition is vacuous, and Theorems~\ref{kn} and~\ref{kmn} reduce to
		Cayley's formula
		\[
		\tau(K_n)=n^{n-2}
		\]
		and its complete bipartite analogue
		\[
		\tau(K_{m,n})=m^{n-1}n^{m-1},
		\]
		respectively. When $\ell=2$, they reduce to the corresponding formulas
		for odd spanning trees stated above. Thus, both the existence result
		and the enumeration formulas reflect the same unifying principle:
		classical spanning trees and odd spanning trees are the first two
		instances of $\ell$-congruent spanning trees.
		
		The remainder of the paper is organized as follows. In
		Section~\ref{ex}, we prove Theorem~\ref{ns} by means of a
		greedy construction. In Section~\ref{en}, we prove
		Theorems~\ref{kn} and~\ref{kmn} using prescribed-degree enumeration
		formulas and a roots-of-unity filter.
	\section{Existence of  $\ell$-congruent spanning trees}\label{ex}
	In this section, we prove Theorem~\ref{ns}. Its necessary condition
	follows immediately from the handshaking lemma, while its sufficient
	condition is proved by a constructive greedy algorithm.
	
  Let $T$ be an $\ell$-congruent spanning tree of $G$. Using the handshaking lemma for $T$, we obtain
	\begin{equation}\label{hs}
		\sum_{v\in V(T)}d_T(v)=2|E(T)|=2(n-1).
	\end{equation}
Since $d_T(v)\equiv1\pmod{\ell}$ for every $v\in V(T)$, reducing
\eqref{hs} modulo $\ell$ gives
\[
n\equiv2(n-1)\pmod{\ell}.
\]
Therefore,
\[
n\equiv2\pmod{\ell}.
\]
This proves the necessary condition.		
	
To prove the second assertion, we greedily construct an
$\ell$-congruent subtree by repeatedly adjoining $\ell$ new leaves
to a vertex already in the tree. We call such a subtree
\emph{extension-maximal} if every vertex of the subtree has at most
$\ell-1$ neighbors outside it.
	\begin{algorithm}[htbp]
		\caption{Greedy construction of an extension-maximal $\ell$-congruent subtree}
		\label{ag}
		\begin{algorithmic}[1]
			\Require A nontrivial connected graph $G$ and a positive integer $\ell$
			\Ensure An extension-maximal $\ell$-congruent subtree $T$ of $G$
			
			\State Choose an arbitrary edge of $G$, and let $T$ be the tree
			consisting of this edge and its two endvertices.
			
			\While{some vertex $v\in V(T)$ has at least $\ell$ neighbors outside $T$}
			\State Choose any $\ell$ such neighbors of $v$.
			\State Add these $\ell$ vertices and the corresponding $\ell$
			edges incident with $v$ to $T$.
			\EndWhile
			
			\State \Return $T$
		\end{algorithmic}
	\end{algorithm}
	At each iteration, the algorithm adds exactly $\ell$ new vertices.
	Hence it terminates after at most $\lfloor (n-2)/\ell\rfloor$
	iterations.
		\begin{proposition}
		Let $\ell$ be a positive integer, and let $G$ be a connected graph
		of order $n$ such that $n\equiv2\pmod{\ell}$. If
		\[
		\delta(G)>\frac{(\ell-1)n}{\ell},
		\]
		then Algorithm~\ref{ag} returns an $\ell$-congruent spanning tree
		of $G$.
	\end{proposition}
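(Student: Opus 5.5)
Two things need checking: the tree stays $\ell$-congruent throughout the algorithm, and the algorithm only stops once it has reached all of $V(G)$.

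\emph{Invariants.} Initially $T$ is a single edge, so both vertices have degree $1\equiv 1\pmod{\ell}$. Each iteration raises $d_T(v)$ by $\ell$ and adds $\ell$ new vertices of degree $1$. The degree congruence is therefore preserved, and $T$ remains a tree, since every new vertex is a leaf joined to a vertex already in $T$. Also, $|V(T)|=2+k\ell$ after $k$ iterations, so $|V(T)|\equiv 2\pmod{\ell}$ at all times. The algorithm terminates because the order of $T$ strictly increases and is bounded by $n$.

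\emph{Spanning.} Let $T$ be the returned tree, put $t=|V(T)|$, and suppose for contradiction that $t<n$. Let $R=V(G)\setminus V(T)$, so $|R|=n-t\ge 1$. Because $t\equiv n\equiv 2\pmod{\ell}$, the difference $n-t$ is a positive multiple of $\ell$, so $|R|\ge \ell$.

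At termination, every $v\in V(T)$ has at most $\ell-1$ neighbours in $R$. Counting edges between $V(T)$ and $R$ gives
\[
e(V(T),R)\le (\ell-1)t.
\]
On the other hand, each $u\in R$ has at most $|R|-1$ neighbours inside $R$, so it has at least $\delta(G)-(n-t-1)$ neighbours in $V(T)$. Hence
\[
e(V(T),R)\ge (n-t)\bigl(\delta(G)-n+t+1\bigr).
\]

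This double count alone can be weak when $|R|$ is large, so I would use a simpler local argument. Take any $u\in R$. Since $d(u)>\frac{(\ell-1)n}{\ell}$ and $u$ has at most $n-t-1$ neighbours in $R$, it has more than $\frac{(\ell-1)n}{\ell}-(n-t-1)$ neighbours in $V(T)$. Now take any $v\in V(T)$. It has at most $\ell-1$ neighbours in $R$ and at most $t-1$ in $V(T)$, so
\[
\delta(G)\le d(v)\le t+\ell-2.
\]
Combining this with the hypothesis gives $t>\frac{(\ell-1)n}{\ell}-\ell+2$.

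This still does not force $t=n$, so I would use the $R$ side instead: every $u\in R$ needs many neighbours in $T$. Summing, the double count gives
\[
(n-t)\Bigl(\frac{(\ell-1)n}{\ell}-n+t+1\Bigr)<(\ell-1)t.
\]
Write $r=n-t$, where $\ell\le r$. The left side is $r\bigl(t+1-\frac{n}{\ell}\bigr)$, and $\frac{n}{\ell}=\frac{t+r}{\ell}$. So the inequality becomes
\[
r\Bigl(t+1-\frac{t+r}{\ell}\Bigr)<(\ell-1)t,
\qquad\text{that is,}\qquad
r\bigl((\ell-1)t+\ell-r\bigr)<\ell(\ell-1)t .
\]

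The main step is to show this inequality fails for every admissible $r$ with $\ell\le r\le n-2$, using the lower bound $t>\frac{(\ell-1)n}{\ell}-\ell+2$. The left side is a concave quadratic in $r$, so on any interval it is minimised at an endpoint, and I would check the two endpoints separately.

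At $r=\ell$ the left side equals $\ell(\ell-1)t$, which contradicts the strict inequality.

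At the large end, $r$ is bounded above by $n-t$, and the bound on $t$ gives $r<\frac{n}{\ell}+\ell-2$. So $(\ell-1)t+\ell-r$ stays of order $t$. I expect the bookkeeping of this endpoint to be the main obstacle. If the double count turns out too weak there, the fallback is to pick a single $u\in R$ and count its neighbours in $T$ directly.
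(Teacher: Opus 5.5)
Your two estimates are exactly the ones the paper uses: the local bound at a vertex of $T$, which you correctly turn into $r<\frac{n}{\ell}+\ell-2$, and the double count across the cut. Your rewriting $r\bigl((\ell-1)t+\ell-r\bigr)<\ell(\ell-1)t$ is also correct. The gap is that you never combine these two estimates, and the endpoint plan you sketch would not do so as stated. With $t=n-r$ one has the exact identity
\[
r\bigl((\ell-1)t+\ell-r\bigr)-\ell(\ell-1)t=(r-\ell)\bigl((\ell-1)n-\ell r\bigr).
\]
So the double-count inequality is actually \emph{satisfied} for every $r>\ell$ with $r>\frac{(\ell-1)n}{\ell}$. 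In particular, the endpoint $r=n-2$ of your interval gives no contradiction once $n>2\ell$.

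The upper endpoint therefore has to come from the local bound. The comparison there must also be exact rather than ``of order $t$'': for $\ell=2$ the two constraints read $r>\frac n2$ and $r<\frac n2$, with no slack at all. The identity closes the argument directly:
\begin{itemize}
\item the strict inequality excludes $r=\ell$, so $r\ge2\ell$ because $\ell\mid r$, and then $\ell r>(\ell-1)n$;
\item combined with $\ell r<n+\ell^2-2\ell$, this gives $(\ell-2)n<\ell(\ell-2)$;
\item that is false for $\ell=2$, and for $\ell\ge3$ it forces $n<\ell$, contradicting $n>r\ge2\ell$.
\end{itemize}
This is precisely how the paper finishes. Writing $r=x\ell$, it uses the integral bound $\delta(G)\ge\frac{(\ell-1)n+2}{\ell}$ to obtain $(x-1)\bigl(x\ell^2-n(\ell-1)\bigr)\ge2x$, but your strict inequality is enough.

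Separately, you never use the hypothesis that $G$ is connected, and for $\ell=1$ no argument can succeed without it: two disjoint edges satisfy $\delta(G)>0$ yet have no spanning tree. Concretely, for $\ell=1$ your local bound is vacuous ($t>1$). The identity above equals $-r(r-1)$, which is negative for every $r\ge2$. So the double-count inequality holds there, and the ``main step'' you state is false. The case $\ell=1$ must be handled on its own, as the paper does: at termination no vertex of $T$ has a neighbour outside $T$, which contradicts connectivity. For $\ell\ge2$, connectivity follows automatically from $\delta(G)>\frac n2$, so nothing else is lost.
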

	\begin{proof}
		Let $T$ be the tree returned by Algorithm~\ref{ag}, and write
		\[
		t=|V(T)|,\qquad
		S=V(G)\setminus V(T),\qquad
		s=|S|=n-t.
		\]
		By construction, $T$ is an $\ell$-congruent tree. Indeed, initially
		$T$ consists of a single edge, so both of its vertices have degree
		one. At each iteration, the degree of one existing vertex increases
		by $\ell$, while every new vertex has degree one. Thus all vertex
		degrees remain congruent to one modulo $\ell$.
		
		It remains to prove that $T$ is spanning. If $\ell=1$ and $T$ is
		not spanning, then the connectivity of $G$ guarantees an edge
		between $V(T)$ and $S$. Hence some vertex of $T$ has a neighbor
		outside $T$, so the algorithm cannot have terminated. Therefore
		$T$ is spanning when $\ell=1$.
		
		Assume henceforth that $\ell\ge2$, and suppose, to the contrary,
		that $s>0$. For each $v\in V(T)$, let
		\[
		d_S(v)=|N_G(v)\cap S|
		\]
		denote the number of neighbors of $v$ outside $T$, and set
		\[
		\Delta_S(T)=\max_{v\in V(T)}d_S(v).
		\]
		We derive two lower bounds for $\Delta_S(T)$.
		
		First, every vertex $v\in V(T)$ has at most $t-1$ neighbors in
		$V(T)$. Therefore
		\[
		d_S(v)
		=d_G(v)-|N_G(v)\cap V(T)|
		\ge\delta(G)-(t-1),
		\]
		and hence
		\begin{equation}\label{lowbd1}
			\Delta_S(T)\ge\delta(G)-t+1.
		\end{equation}

Second,	let $E_G(V(T),S)$ denote the set of edges of $G$ with one endpoint
	in $V(T)$ and the other in $S$. Since every vertex of $S$ has at most
	$s-1$ neighbors in $S$, it has at least $\delta(G)-s+1$ neighbors in
	$V(T)$. Hence
	\[
	|E_G(V(T),S)|
	\ge s\bigl(\delta(G)-s+1\bigr).
	\]
	Since
	\[
	|E_G(V(T),S)|
	=\sum_{v\in V(T)}d_S(v),
	\]
	averaging over the $t$ vertices of $T$ gives
		\begin{equation}\label{lowbd2}
			\Delta_S(T)
			\ge
			\left\lceil
			\frac{s\bigl(\delta(G)-s+1\bigr)}{t}
			\right\rceil.
		\end{equation}
		
		On the other hand, the algorithm has terminated, so no vertex of
		$T$ has $\ell$ or more neighbors outside $T$. Thus
		\begin{equation}\label{ub}
			\Delta_S(T)\le\ell-1.
		\end{equation}
		Combining \eqref{lowbd1} with
		\eqref{ub}, we obtain
		\begin{equation}\label{tlb}
			t\ge\delta(G)-\ell+2.
		\end{equation}
		Combining \eqref{lowbd2} with
		\eqref{ub}, and omitting the ceiling, gives
		\begin{equation}\label{st}
			s\bigl(\delta(G)-s+1\bigr)
			\le(\ell-1)t.
		\end{equation}
		
		Since the algorithm starts with two vertices and adds exactly
		$\ell$ vertices at each iteration,
		\[
		t\equiv2\pmod{\ell}.
		\]
		Together with $n\equiv2\pmod{\ell}$, this implies
		\[
		s=n-t\equiv0\pmod{\ell}.
		\]
		Since $s>0$, we may write
		\[
		s=x\ell
		\]
		for some integer $x\ge1$.
		
		Because $n\equiv2\pmod{\ell}$, the number
		\[
		D:=\frac{(\ell-1)n+2}{\ell}
		\]
		is an integer. The strict minimum-degree condition therefore
		implies
		\begin{equation}\label{dld}
			\delta(G)\ge D
			=\frac{(\ell-1)n+2}{\ell}.
		\end{equation}
		
		We now use the two external-degree estimates in turn. From
		\eqref{tlb}, \eqref{dld}, and
		$t=n-x\ell$, we obtain
		\[
		n-x\ell
		\ge\frac{(\ell-1)n+2}{\ell}-\ell+2.
		\]
		Multiplying by $\ell$ and rearranging yields
		\begin{equation}\label{xub}
			x\ell^2\le n+\ell^2-2\ell-2.
		\end{equation}
		
		Next, substitute $s=x\ell$, $t=n-x\ell$, and
		\eqref{dld} into
		\eqref{st}. This gives
		\[
		x\ell
		\left(
		\frac{(\ell-1)n+2}{\ell}-x\ell+1
		\right)
		\le(\ell-1)(n-x\ell).
		\]
		Expanding the preceding inequality gives
		\[
		x(\ell-1)n+2x+x\ell-x^2\ell^2
		\le(\ell-1)n-x\ell(\ell-1).
		\]
		Thus
		\[
		(x-1)n(\ell-1)-x^2\ell^2+x\ell^2+2x\le0,
		\]
		which is equivalent to
	   \begin{equation}\label{eq:key-external}
		(x-1)\bigl(x\ell^2-n(\ell-1)\bigr)\ge2x.
	\end{equation}
		In particular, $x\ne1$, since the left-hand side would then be
		zero while the right-hand side would be two. Hence $x\ge2$.
		Since $x-1>0$, \eqref{eq:key-external} also implies
		\begin{equation}\label{eq:x-lower}
			n(\ell-1)<x\ell^2.
		\end{equation}
		
		Combining \eqref{xub} and \eqref{eq:x-lower}, we obtain
		\[
		n(\ell-1)
		<n+\ell^2-2\ell-2,
		\]
		or equivalently,
		\begin{equation}\label{eq:final-external}
			n(\ell-2)<\ell^2-2\ell-2.
		\end{equation}
		
		If $\ell=2$, then \eqref{eq:final-external} reads
		\[
		0<-2,
		\]
		which is impossible. If $\ell\ge3$, then
		\eqref{eq:final-external} gives
		\[
		n<
		\frac{\ell^2-2\ell-2}{\ell-2}
		=\ell-\frac{2}{\ell-2}
		<\ell.
		\]
		However, $s=x\ell$ and $x\ge2$ imply
		\[
		n\ge s=x\ell\ge2\ell,
		\]
		again a contradiction.
		
		Therefore $s=0$. Hence $T$ is spanning, and Algorithm~\ref{ag}
		returns an $\ell$-congruent spanning tree of $G$.
	\end{proof}

	\section{Enumeration of $\ell$-congruent spanning trees}\label{en}
In this section, we prove Theorems~\ref{kn} and~\ref{kmn} using a
method similar to that of Feng et al.~\cite{feng2026}. Our proofs are
based on prescribed-degree enumeration formulas and a
roots-of-unity filter. We begin with two results on counting spanning
trees with prescribed vertex degrees.
	
	\begin{lemma}[\cite{berge,lovasz}]\label{kd}
		Let $d_1,d_2,\ldots,d_n$ be positive integers summing up to
		$2n-2$. Then the number of spanning trees of $K_n$ in which
		vertex $j$ has degree exactly $d_j$ for all
		$j=1,2,\ldots,n$ equals
		\[
		\frac{(n-2)!}
		{(d_1-1)!(d_2-1)!\cdots(d_n-1)!}.
		\]
	\end{lemma}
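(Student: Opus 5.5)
This is the classical prescribed-degree version of Cayley's formula; I would prove it through the Prüfer correspondence, with a direct induction on $n$ as a fallback. First I set up the Prüfer bijection between labelled trees on $[n]=\{1,\dots,n\}$ and sequences in $[n]^{n-2}$. Given a tree, repeatedly delete the leaf with the smallest label and record its unique neighbour, stopping when two vertices remain. The inverse map is as follows. Given a sequence $(a_1,\dots,a_{n-2})$, at step $i$ join $a_i$ to the smallest label that is not yet used and does not occur among $a_i,\dots,a_{n-2}$; at the end, join the two remaining labels. That these two maps are mutually inverse is standard, and I would check it by induction on $n$: the first recorded entry, together with the smallest leaf, determines the tree minus that leaf, which is a smaller instance.

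The key observation is that in the Prüfer sequence of $T$, each vertex $j$ appears exactly $d_T(j)-1$ times. To see this, note that a vertex is recorded once each time one of its neighbours is deleted as a leaf, while $j$ itself still remains. Vertex $j$ is eventually either deleted as a leaf or survives in the final edge, and in either case exactly one of its incident edges is not accounted for by a recording of $j$. Hence the recordings of $j$ number $d_T(j)-1$.

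Given this, the trees of $K_n$ with degree sequence $(d_1,\dots,d_n)$ correspond bijectively to the sequences of length $n-2$ in which symbol $j$ occurs exactly $d_j-1$ times. The hypothesis $\sum d_j=2n-2$ with every $d_j\ge1$ guarantees that these multiplicities are nonnegative and sum to $n-2$. Such sequences are counted by the multinomial coefficient $(n-2)!/\prod_j (d_j-1)!$, which is the claimed formula.

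The main obstacle is the careful justification of the multiplicity claim and of the bijection; everything after that is just the multinomial count. If I wanted to avoid the Prüfer machinery, I would instead argue by induction on $n$. Since the degrees sum to $2n-2<2n$, some $d_j=1$, so vertex $j$ is a leaf. Deleting it and summing over its possible neighbour $k\ne j$ (which needs $d_k\ge2$) gives
\[
\sum_{k\ne j}\frac{(n-3)!\,(d_k-1)}{\prod_{i\ne j}(d_i-1)!}
=\frac{(n-3)!\,(n-2)}{\prod_{i}(d_i-1)!},
\]
using $\sum_{k\ne j}(d_k-1)=n-2$. This yields the claimed count, and terms with $d_k=1$ vanish automatically, so no separate case is needed. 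The base case $n=2$ is the single edge, where the formula gives $0!/(0!\,0!)=1$.
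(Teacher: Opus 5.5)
The paper does not prove this lemma. It quotes it as a classical fact from Berge and Lov\'asz, so there is no in-paper argument to compare yours with.

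Both of your arguments are correct.

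\textbf{Pr\"ufer route.} Your multiplicity count is right: every edge at $j$ except the one removed together with $j$ (or the final edge) disappears when a neighbour of $j$ is pruned. Note, however, that your inductive check of the bijection already relies on this count, since the leaf removed first is the smallest label absent from the code. So the multiplicity claim should be proved before the bijection, not after.

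\textbf{Inductive route.} Here $(d_j-1)!=1$ and $\sum_{k\ne j}(d_k-1)=(2n-3)-(n-1)=n-2$, so your displayed computation is exact. The vanishing of the $d_k=1$ terms correctly reflects that for $n\ge 3$ two leaves cannot be adjacent.

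\textbf{What each route buys.} The induction is shorter and avoids verifying a bijection. It is also in the style of the inductive proof of the bipartite analogue, Lemma~\ref{bd}, which the paper attributes to Ge and Yu. The Pr\"ufer route fits the paper's later use better. It shows directly that $\ell$-congruent spanning trees of $K_n$ correspond to words of length $n-2$ over $\{1,\dots,n\}$ in which every letter occurs a multiple of $\ell$ times. The exponential generating function of such words is $f_\ell(x)^n$. Hence the identity $\tau_\ell(K_n)=(n-2)!\,[x^{n-2}]f_\ell(x)^n$ used in the proof of Theorem~\ref{kn} follows without summing over degree sequences at all.
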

	The following result is the complete bipartite analogue of
	Lemma~\ref{kd}. It appears as Eq.~(2.2) in Moon
	\cite{moon1970}; a recent inductive proof was given by Ge and Yu
	\cite{ge2026}.
	\begin{lemma}[\cite{moon1970, ge2026}]\label{bd}
	Let $K_{m,n}$ be the complete bipartite graph with bipartition $A = \{u_1, u_2, \ldots, u_m\}$ and $B = \{v_1, v_2, \ldots, v_n\}$. Let $a_j$ and $b_h$ be positive integers such that $\sum_{j=1}^{m} a_j = \sum_{h=1}^{n} b_h = m + n - 1$. Then the number of spanning trees of $K_{m,n}$ in which the vertices $u_j$ and $v_h$ have degree exactly $a_j$ and $b_h$ for $j \in \{1, 2, \dots, m\}$ and $h \in \{1, 2, \dots, n\}$ equals
	\[
	\frac{(m-1)!(n-1)!}{\prod_{j=1}^m (a_j-1)! \prod_{h=1}^n (b_h-1)!}.
	\]
	\end{lemma}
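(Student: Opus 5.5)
We prove Lemma~\ref{bd} by induction on $m+n$, removing a leaf. This mirrors the standard inductive proof of Lemma~\ref{kd}. Throughout, write
\[
N(a_1,\dots,a_m;b_1,\dots,b_h)
\]
for the number of spanning trees with the prescribed degrees, and write
\[
F(a;b)=\frac{(m-1)!(n-1)!}{\prod_j (a_j-1)!\prod_h (b_h-1)!}
\]
for the claimed value.

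\textbf{Base cases.} If $m=1$, every $v_h$ must be a leaf joined to $u_1$. This forces $b_h=1$ for all $h$ and $a_1=n$, so there is exactly one tree. The formula gives $0!\,(n-1)!/((n-1)!\cdot 1)=1$, as required. The case $n=1$ is symmetric.

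\textbf{Reduction to a leaf on the smaller side.} Now assume $m,n\ge2$. Since $\sum_h b_h=m+n-1$ and every $b_h\ge1$, we have $\sum_h(b_h-1)=m-1<m+n-2$. Similarly $\sum_j(a_j-1)=n-1$. A tree on $m+n\ge 4$ vertices has leaves, and in fact both sides contain leaves. To see this, suppose every $a_j\ge2$. Then $m+n-1=\sum_j a_j\ge 2m$, so $n\ge m+1$. Consequently $\sum_h(b_h-1)=m-1<n$, which forces some $b_h=1$. The argument is symmetric, so we may assume without loss of generality that some $b_h=1$; relabel so that $b_n=1$.

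\textbf{Decomposition.} Classify the trees according to the neighbour $u_j$ of the leaf $v_n$. Deleting $v_n$ gives a bijection with spanning trees of $K_{m,n-1}$ in which $u_j$ has degree $a_j-1$ and all other prescribed degrees are unchanged. The new sums are both $m+n-2$, as they should be. If $a_j=1$, then $u_j$ would become isolated, so such $j$ contribute nothing.

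\textbf{Applying the induction hypothesis.} The hypothesis, applied to each term with $a_j\ge2$, gives
\[
N=\sum_{j:\,a_j\ge2}\frac{(m-1)!(n-2)!}{\prod_{i\ne j}(a_i-1)!\,(a_j-2)!\,\prod_{h<n}(b_h-1)!}
=\frac{(m-1)!(n-2)!}{\prod_i(a_i-1)!\prod_h(b_h-1)!}\sum_j (a_j-1).
\]
Here we used $(b_n-1)!=1$, and the terms with $a_j=1$ vanish automatically because of the factor $a_j-1=0$. Since $\sum_j(a_j-1)=n-1$, the right-hand side equals $F(a;b)$, completing the induction.

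\textbf{Main obstacle.} The only delicate point is guaranteeing a leaf whose removal keeps both sides nonempty. When $n\ge2$, removing a leaf $v_n$ from $B$ leaves $n-1\ge1$ vertices in $B$, so this is satisfied. The symmetric case, a leaf in $A$, is handled identically with the roles of $m$ and $n$ swapped.
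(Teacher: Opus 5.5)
The paper does not prove this lemma itself. It quotes Eq.~(2.2) of Moon and points to the inductive proof of Ge and Yu. Your leaf-deletion induction on $m+n$ is a correct, self-contained proof of that kind:
\begin{itemize}
\item the identities $\sum_j(a_j-1)=n-1$ and $\sum_h(b_h-1)=m-1$ rule out every prescribed degree being at least $2$, so some prescribed degree equals $1$, and by the symmetry of the formula you may take $b_n=1$;
\item deleting $v_n$ gives the bijection with the $K_{m,n-1}$ instances, one for each possible neighbour $u_j$;
\item the identity $\sum_j(a_j-1)=n-1$ reassembles $F(a;b)$, with the $a_j=1$ terms correctly discarded.
\end{itemize}

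One sentence should be corrected. The claim that ``in fact both sides contain leaves'' is false. For example, the path $v_2u_1v_1u_2v_3$ in $K_{2,3}$, with $a=(2,2)$ and $b=(2,1,1)$, has no leaf in $A$. Your justification only shows that if $A$ has no leaf then $B$ has one. That weaker statement is all the subsequent ``without loss of generality'' step needs, so the proof stands once the sentence is rephrased. A minor typo: $N(a_1,\dots,a_m;b_1,\dots,b_h)$ should end in $b_n$.
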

		For a positive integer $\ell$, let $f_\ell(x)\in \mathbb{C}[[x]]$ be the formal power series defined by
		\begin{equation}
			f_{\ell}(x)=\sum_{k=0}^{\infty}\frac{x^{k\ell}}{(k\ell)!}.
		\end{equation}
The first two members of this family are \begin{equation}\label{f1}
f_1(x)=1+x+\frac{x^2}{2!}+\frac{x^3}{3!}+\cdots=\exp(x)
\end{equation} and 
\begin{equation}\label{f2}
	f_2(x)=1+\frac{x^2}{2!}+\frac{x^4}{4!}+\frac{x^6}{6!}+\cdots=\cosh(x)=\frac{\exp(x)+\exp(-x)}{2}.
	\end{equation}
Eqs.~\eqref{f1}  and \eqref{f2} are special cases of the following result. 
\begin{lemma}\label{fl}
	Let $\zeta=\exp(2\pi \iu/\ell)$. Then	\begin{equation}\label{eq:filter}
		f_\ell(x)  = \frac{1}{\ell} \sum_{j=0}^{\ell-1} \exp(\zeta^j x).
	\end{equation}
\end{lemma}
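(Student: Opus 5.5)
We will prove \eqref{eq:filter} by comparing coefficients of $x^m$ on both sides, working in $\mathbb{C}[[x]]$. Expanding each exponential as a power series and interchanging the finite sum over $j$ with the sum over $m$, the right-hand side becomes
\[
\frac{1}{\ell}\sum_{j=0}^{\ell-1}\sum_{m=0}^{\infty}\frac{\zeta^{jm}x^m}{m!}
=\sum_{m=0}^{\infty}\left(\frac{1}{\ell}\sum_{j=0}^{\ell-1}\zeta^{jm}\right)\frac{x^m}{m!}.
\]
It therefore suffices to show that the inner average equals $1$ when $\ell\mid m$ and $0$ otherwise, since then only the terms $x^{k\ell}/(k\ell)!$ survive, each with coefficient one, which is exactly $f_\ell(x)$.

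This is the standard roots-of-unity filter. If $\ell\mid m$, then $\zeta^m=1$, so every summand $\zeta^{jm}$ equals $1$ and the average is $1$. If $\ell\nmid m$, then $\omega:=\zeta^m\neq1$, because $\zeta$ is a primitive $\ell$-th root of unity. Since $\omega^\ell=(\zeta^\ell)^m=1$, the geometric series gives
\[
\sum_{j=0}^{\ell-1}\omega^j=\frac{\omega^\ell-1}{\omega-1}=0.
\]

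No step is a genuine obstacle. The only points to state explicitly are that the interchange of sums is legitimate, because the outer sum is finite and coefficients are compared termwise in $\mathbb{C}[[x]]$, and that $\zeta^m\neq1$ precisely when $\ell\nmid m$. As a sanity check, $\ell=1$ gives $\exp(x)$, which is \eqref{f1}, and $\ell=2$ with $\zeta=-1$ gives $\cosh(x)$, which is \eqref{f2}.
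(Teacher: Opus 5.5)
Your proposal is correct and follows essentially the same route as the paper. You expand each exponential, swap the finite sum over $j$ with the coefficient sum, and apply the roots-of-unity filter; the only addition is that you justify the filter identity via the geometric series with $\omega=\zeta^m\neq1$, which the paper states without proof.
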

\begin{proof}
	Expanding each exponential as a formal power series gives
	\[
	\frac{1}{\ell}\sum_{j=0}^{\ell-1}\exp(\zeta^j x)
	=
	\sum_{r=0}^{\infty}
	\frac{x^r}{r!}
	\left(
	\frac{1}{\ell}
	\sum_{j=0}^{\ell-1}\zeta^{jr}
	\right).
	\]
	Since
	\[
	\frac{1}{\ell}\sum_{j=0}^{\ell-1}\zeta^{jr}
	=
	\begin{cases}
		1, & \ell\mid r,\\
		0, & \ell\nmid r,
	\end{cases}
	\]
	only the terms whose exponents are divisible by $\ell$ remain.
	Therefore,
	\[
	\frac{1}{\ell}\sum_{j=0}^{\ell-1}\exp(\zeta^j x)
	=
	\sum_{k=0}^{\infty}
	\frac{x^{k\ell}}{(k\ell)!}
	=f_\ell(x).
	\]
\end{proof}
	For $f(x)\in \mathbb{C}[[x]]$ and a nonnegative integer $m$, we use $[x^m]f(x)$ to denote the coefficient of $x^m$ in $f(x)$.
	\begin{proof}[Proof of Theorem~\ref{kn}]
		We label the vertices of $K_n$ as $v_1,\ldots,v_n$. For an
		$\ell$-congruent spanning tree $T$ of $K_n$, let
		\[
		m_j=d_T(v_j)-1,\qquad j=1,\ldots,n.
		\]
		Then each $m_j$ is a nonnegative multiple of $\ell$. Moreover,
		\[
		\sum_{j=1}^{n}m_j
		=\sum_{j=1}^{n}\bigl(d_T(v_j)-1\bigr)
		=2(n-1)-n
		=n-2.
		\]
		By Lemma~\ref{kd}, summing over all possible degree sequences gives
		\begin{equation}\label{eq:degree-sum}
			\tau_\ell(K_n)
			=
			\sum_{\substack{
					m_1+\cdots+m_n=n-2\\
					\ell\mid m_j\ \text{for }j=1,\ldots,n
			}}
			\frac{(n-2)!}{m_1!\cdots m_n!}.
		\end{equation}
		By the definition of $f_\ell(x)$, the sum in
		\eqref{eq:degree-sum} can be expressed as
		\[
		\tau_\ell(K_n)
		=(n-2)![x^{n-2}]\,f_\ell(x)^n.
		\]
		Using Lemma~\ref{fl}, we obtain
		\begin{equation}\label{tk}
			\tau_\ell(K_n)
			=
			\frac{(n-2)!}{\ell^n}[x^{n-2}]
			\left(
			\sum_{p=0}^{\ell-1}\exp(\zeta^p x)
			\right)^n,
			\qquad
			\zeta=e^{2\pi\iu/\ell}.
		\end{equation}
		By the multinomial theorem,
		\begin{align}
			\left(
			\sum_{p=0}^{\ell-1}\exp(\zeta^p x)
			\right)^n
			&=
			\sum_{\substack{
					k_0+\cdots+k_{\ell-1}=n\\
					k_0,\ldots,k_{\ell-1}\ge 0
			}}
			\frac{n!}{k_0!\cdots k_{\ell-1}!}
			\prod_{p=0}^{\ell-1}
			\left(\exp(\zeta^p x)\right)^{k_p}
			\notag\\
			&=
			\sum_{\substack{
					k_0+\cdots+k_{\ell-1}=n\\
					k_0,\ldots,k_{\ell-1}\ge 0
			}}
			\frac{n!}{k_0!\cdots k_{\ell-1}!}
			\exp\left(
			x\sum_{p=0}^{\ell-1}k_p\zeta^p
			\right).
			\label{mt}
		\end{align}
		For every $\alpha\in\mathbb{C}$, we have
		\[
		(n-2)![x^{n-2}]e^{\alpha x}
		=\alpha^{n-2}.
		\]
		Substituting \eqref{mt} into \eqref{tk} and extracting the
		coefficient of $x^{n-2}$ therefore yields
		\begin{align*}
			\tau_\ell(K_n)
			&=
			\frac{1}{\ell^n}
			\sum_{\substack{
					k_0+\cdots+k_{\ell-1}=n\\
					k_0,\ldots,k_{\ell-1}\ge 0
			}}
			\frac{n!}{k_0!\cdots k_{\ell-1}!}
			\left(
			\sum_{p=0}^{\ell-1}k_p\zeta^p
			\right)^{n-2}\\
			&=
			\frac{1}{\ell^n}
			\sum_{\substack{
					k_0+\cdots+k_{\ell-1}=n\\
					k_0,\ldots,k_{\ell-1}\ge 0
			}}
			\frac{n!}{k_0!\cdots k_{\ell-1}!}
			\left(
			\sum_{p=0}^{\ell-1}
			k_p e^{2\pi\iu p/\ell}
			\right)^{n-2}.
		\end{align*}
		This proves the theorem.
	\end{proof}	
	\begin{remark}
		We briefly demonstrate how Theorem~\ref{kn} recovers known results algebraically. For ordinary spanning trees ($\ell=1$), the degree constraint becomes vacuous, and the roots of unity collapse to $\zeta = e^{2\pi \iu } = 1$. The summation index reduces to a single term $k_0 = n$, and the formula cleanly simplifies to
		\[
		\tau_1(K_n) = \frac{1}{1^n} \frac{n!}{n!} \left( n \cdot 1 \right)^{n-2} = n^{n-2},
		\]
		recovering Cayley's formula. Similarly, for odd spanning trees ($\ell=2$), we have $\zeta = -1$. The summation reduces to combinations of $k_0$ and $k_1$ such that $k_0 + k_1 = n$. Replacing $k_0$ with $k$, we get $k_1 = n-k$, and the formula becomes
		\[
		\tau_2(K_n) = \frac{1}{2^n} \sum_{k=0}^n \binom{n}{k} \bigl(k - (n-k)\bigr)^{n-2} = \frac{1}{2^n} \sum_{k=0}^n \binom{n}{k} (2k-n)^{n-2},
		\]
		which precisely matches the formula of Feng, Chen, and Wu \cite{feng2026}.
	\end{remark}
	Next, we turn to the enumeration of $\ell$-congruent spanning trees in complete bipartite graphs. The proof of Theorem~\ref{kmn} follows a similar analytical framework, but relies on the bipartite prescribed-degree formula given in Lemma~\ref{bd}.
	\begin{proof}[Proof of Theorem~\ref{kmn}]
	Let
	\[
	A=\{u_1,\ldots,u_m\}
	\qquad\text{and}\qquad
	B=\{v_1,\ldots,v_n\}
	\]
	be the bipartition of $K_{m,n}$. For an $\ell$-congruent
	spanning tree $T$ of $K_{m,n}$, set
	\[
	a_j'=d_T(u_j)-1,\qquad j=1,\ldots,m,
	\]
	and
	\[
	b_h'=d_T(v_h)-1,\qquad h=1,\ldots,n.
	\]
	Then all $a_j'$ and $b_h'$ are nonnegative multiples of $\ell$.
	Since every edge of $T$ has one endpoint in each part and
	$|E(T)|=m+n-1$, we have
	\[
	\sum_{j=1}^{m}d_T(u_j)
	=
	\sum_{h=1}^{n}d_T(v_h)
	=
	m+n-1.
	\]
	Consequently,
	\[
	\sum_{j=1}^{m}a_j'=n-1
	\qquad\text{and}\qquad
	\sum_{h=1}^{n}b_h'=m-1.
	\]
	
	By Lemma~\ref{bd}, summing over all admissible degree sequences
	gives
	\begin{align*}
		\tau_\ell(K_{m,n})
		&=
		\sum_{\substack{
				a_1'+\cdots+a_m'=n-1\\
				\ell\mid a_j'\text{ for }j=1,\ldots,m
		}}
		\;
		\sum_{\substack{
				b_1'+\cdots+b_n'=m-1\\
				\ell\mid b_h'\text{ for }h=1,\ldots,n
		}}
		\frac{(m-1)!(n-1)!}
		{a_1'!\cdots a_m'!\,b_1'!\cdots b_n'!}\\
		&=
		(n-1)![x^{n-1}]f_\ell(x)^m\,
		(m-1)![y^{m-1}]f_\ell(y)^n.
	\end{align*}
	Using Lemma~\ref{fl}, this becomes
	\begin{align}
		\tau_\ell(K_{m,n})
		={}&
		\frac{(n-1)!}{\ell^m}[x^{n-1}]
		\left(
		\sum_{p=0}^{\ell-1}\exp(\zeta^p x)
		\right)^m
		\notag\\
		&\times
		\frac{(m-1)!}{\ell^n}[y^{m-1}]
		\left(
		\sum_{q=0}^{\ell-1}\exp(\zeta^q y)
		\right)^n,
		\qquad
		\zeta=\exp(2\pi\iu/\ell).
		\label{eq:bipartite-coeff}
	\end{align}
	
	Applying the multinomial theorem to the two powers in
	\eqref{eq:bipartite-coeff}, we obtain
	\begin{align*}
		\left(
		\sum_{p=0}^{\ell-1}\exp(\zeta^p x)
		\right)^m
		&=
		\sum_{\substack{
				k_0+\cdots+k_{\ell-1}=m\\
				k_0,\ldots,k_{\ell-1}\ge 0
		}}
		\frac{m!}{k_0!\cdots k_{\ell-1}!}
		\exp\left(
		x\sum_{p=0}^{\ell-1}k_p\zeta^p
		\right),
	\end{align*}
	and, similarly,
	\begin{align*}
		\left(
		\sum_{q=0}^{\ell-1}\exp(\zeta^q y)
		\right)^n
		&=
		\sum_{\substack{
				s_0+\cdots+s_{\ell-1}=n\\
				s_0,\ldots,s_{\ell-1}\ge 0
		}}
		\frac{n!}{s_0!\cdots s_{\ell-1}!}
		\exp\left(
		y\sum_{q=0}^{\ell-1}s_q\zeta^q
		\right).
	\end{align*}
	Since
	\[
	r![z^r]\exp(\alpha z)=\alpha^r,
	\]
	extracting the required coefficients in
	\eqref{eq:bipartite-coeff} yields
	\begin{align*}
		\tau_\ell(K_{m,n})
		={}&
		\frac{1}{\ell^{m+n}}
		\left[
		\sum_{\substack{
				k_0+\cdots+k_{\ell-1}=m\\
				k_0,\ldots,k_{\ell-1}\ge 0
		}}
		\frac{m!}{k_0!\cdots k_{\ell-1}!}
		\left(
		\sum_{p=0}^{\ell-1}k_p\zeta^p
		\right)^{n-1}
		\right]
		\\
		&\quad\times
		\left[
		\sum_{\substack{
				s_0+\cdots+s_{\ell-1}=n\\
				s_0,\ldots,s_{\ell-1}\ge 0
		}}
		\frac{n!}{s_0!\cdots s_{\ell-1}!}
		\left(
		\sum_{q=0}^{\ell-1}s_q\zeta^q
		\right)^{m-1}
		\right].
	\end{align*}
	Substituting $\zeta=\exp(2\pi\iu/\ell)$ proves the theorem.
\end{proof}

	\begin{remark}
		By a similar algebraic reduction, Theorem~\ref{kmn} yields the corresponding formulas for complete bipartite graphs. Setting $\ell=1$ gives $\zeta = 1$, and the sums collapse to single terms $k_0 = m$ and $s_0 = n$, yielding
		\[
		\tau_1(K_{m,n}) = \frac{1}{1^{m+n}} \left[ \frac{m!}{m!} m^{n-1} \right] \left[ \frac{n!}{n!} n^{m-1} \right] = m^{n-1} n^{m-1}.
		\]
		Setting $\ell=2$ and $\zeta=-1$, the sums resolve into two independent binomial expansions, giving
		\[
	\tau_2(K_{m,n}) = \frac{1}{2^{m+n}} \left[ \sum_{k=0}^m \binom{m}{k} (2k-m)^{n-1} \right] \left[ \sum_{s=0}^n \binom{n}{s} (2s-n)^{m-1} \right],
	\]
		which recovers the complete bipartite odd-spanning-tree formula of Ge and Yu \cite{ge2026} and Xu and Xu \cite{xu2026}.
	\end{remark}
\section*{Declaration of competing interest}
	The authors declare that they have no known competing financial interests or personal relationships that could have
	appeared to influence the work reported in this paper.
		\section*{Acknowledgments}
	This work is partially supported by the National Natural Science Foundation of China (Grant No. 12001006) and Wuhu Science and Technology Project, China (Grant No. 2024kj015). The authors acknowledge the use of Gemini 3.1 Pro for language polishing and detailed proofreading of the manuscript.
	
\end{document}